\newtheorem{theorem}{\rm\bf Theorem}[section]
\newtheorem{proposition}[theorem]{\rm\bf Proposition}
\newtheorem{lemma}[theorem]{\rm\bf Lemma}
\newtheorem{corollary}[theorem]{\rm\bf Corollary}
\newtheorem*{theorem*}{Theorem}
\newtheorem*{theorem 1}{\rm\bf Proposition 1}
\newtheorem*{theorem 2}{\rm\bf Proposition 2}
\theoremstyle{definition}
\newtheorem{definition}[theorem]{\rm\bf Definition}
\theoremstyle{remark}
\newtheorem{remark}[theorem]{\rm\bf Remark}
\def\half#1#2{\begin{matrix}\frac{#1}{#2}\end{matrix}}
\def\R#1{\mathbb{R}^{#1}}
\def\scal#1#2{\langle #1,\, #2 \rangle}
\def\field{k}
\begin{document}

\title{A correction of the decomposability result in a paper by Meyer-Neutsch}

\author{Vladimir G. Tkachev}
\address{Department of Mathematics, Link\"oping University\\ Link\"oping, 58183, Sweden, vladimir.tkatjev@liu.se}

\begin{abstract}
In this short note, it is shown that there is a gap in the proof of Theorem~11 in the paper of Meyer and Neutsch (J. of Algebra, 1993). We prove, nevertheless, that the statement of the theorem is true and fix the proof by using a certain extremal property of idempotents which has an independent interest.
\end{abstract}

\keywords{
Commutative nonassociative algebras; Griess algebra; Idempotents; Associative bilinear form; Metrised algebras
}
\subjclass[2000]{
Primary 17A99, 17C27; Secondary 20D08}

\maketitle

\section{Introduction}
In their 1993 paper \cite{MeyerNeutsch}, Meyer and Neutsch established the existence of a $48$-dimensional associative subalgebra in the Griess algebra $\frak{G}$ and conjectured that $48$ is the largest possible dimension of an associative subalgebra of $\frak{G}$. This conjecture has been proved by Miyamoto \cite{Miyamoto96}.

One of the key results claimed in \cite{MeyerNeutsch}, Theorem~11 asserts that an idempotent $c\in \frak{G}$ is indecomposable if and only if the Peirce eigenspace $\frak{G}_c(1)$ with the eigenvalue $1$ is at most one-dimensional. This result was used by Miyamoto (see Theorem~6.8 in \cite{Miyamoto96}) and in a later research on associative subalgebras of low-dimensional Majorana algebras, see for instance  p.~576 in \cite{Castillo} and section~3.1 in \cite{Matsuo01}.

Unfortunately, there is a  gap in the argument in the proof of Theorem~11  in \cite{MeyerNeutsch}. Recall the statement of the theorem.

\begin{theorem}[Theorem~11, in \cite{MeyerNeutsch}]
An idempotent $a\in \frak{G}$ is indecomposable if and only if the Peirce eigenspace $\frak{G}_a(1)=\{x\in \frak{G}: L_ax=x\}$ is at most one-dimensional.
\end{theorem}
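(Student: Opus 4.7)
My plan is to establish the two implications separately. The easy direction is a short computation: if $a=a_1+a_2$ with $a_1,a_2$ nonzero idempotents and $a_1a_2=0$, then $a\cdot a_i=a_i^2+a_1a_2=a_i$ places both summands in $V:=\frak{G}_a(1)$, and the associativity of the bilinear form yields
\[
\langle a_1,a_2\rangle=\langle a_1^2,a_2\rangle=\langle a_1,a_1a_2\rangle=0,
\]
so the two nonzero elements $a_1,a_2$ are linearly independent and $\dim V\ge 2$.

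For the converse, assume $\dim V\ge 2$. I first reduce the problem to producing a single nonzero idempotent $b\in V$ with $b\ne a$. Since $b\in V$ implies $ab=b$, a direct calculation shows that $b':=a-b$ satisfies $(b')^2=a-2ab+b^2=a-b=b'$ and $bb'=ab-b^2=0$, while $b'\ne 0$ because $b\ne a$; this yields the required decomposition $a=b+b'$ into nonzero orthogonal idempotents.

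To construct such a $b$ I would invoke an extremal property of idempotents in a finite-dimensional commutative algebra equipped with a positive-definite associative bilinear form. The starting observation is that the scale-invariant functional
\[
\Phi(x)=\frac{\langle x,x\rangle^3}{\langle x^2,x\rangle^2}
\]
has as its critical points on $\frak{G}\setminus\{\langle x^2,x\rangle=0\}$ exactly the nonzero scalar multiples of idempotents: this is a short Lagrange computation using the identity $\nabla\langle x^2,x\rangle=3x^2$, valid because the form is associative. The plan is to select a critical point of $\Phi$ (or of a related constrained problem) that is supported inside $V$, so that after rescaling one obtains an idempotent $b\in V$; the hypothesis $\dim V\ge 2$ then enters by providing enough room in $V$ for such a critical point to be linearly independent from $a$, ensuring $b\ne a$.

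The principal obstacle is precisely the transfer from a critical point restricted to $V$ to a genuine idempotent of the whole algebra $\frak{G}$. Extremizing $\Phi$ on $V$ via Lagrange multipliers only yields the weak identity $\pi_V(x_0^2)=\lambda x_0$, where $\pi_V$ denotes the orthogonal projection onto $V$; to upgrade this to the full identity $x_0^2=\lambda x_0$ in $\frak{G}$, one must show that the $V^\perp$-component of $x_0^2$ vanishes at the extremum. This is exactly the substance of the extremal property referred to in the abstract, and I expect its proof to exploit the positive-definiteness of $\langle\cdot,\cdot\rangle$ together with the Peirce eigendecomposition of $L_a$ in an essential way. Once this property is established, the reduction in the second paragraph converts the constructed idempotent $b\in V\setminus\{0,a\}$ into the desired decomposition $a=b+(a-b)$, completing the proof.
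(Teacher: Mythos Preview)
Your overall skeleton is correct and matches the paper: the easy direction is the same, and for the converse you correctly reduce to producing a nonzero idempotent $b\in V=\frak{G}_a(1)$ with $b\ne a$, after which $a=b+(a-b)$ is the desired decomposition. However, you have misidentified where the real difficulty lies.

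The obstacle you call ``principal''---that extremizing on $V$ only yields $\pi_V(x_0^2)=\lambda x_0$ rather than $x_0^2=\lambda x_0$---is in fact a non-issue. The Peirce subspace $\frak{G}_a(1)$ is a \emph{subalgebra} of $\frak{G}$ (this is noted in the paper immediately after the theorem statement), so $x_0\in V$ already forces $x_0^2\in V$ and hence $\pi_V(x_0^2)=x_0^2$. No further argument is needed to promote the constrained critical point to a genuine idempotent of $\frak{G}$.

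The step you do gloss over---``$\dim V\ge 2$ provides enough room \dots\ ensuring $b\ne a$''---is exactly the gap in the original Meyer--Neutsch proof that this paper is written to repair. Dimension alone does not guarantee a second idempotent: the paper constructs, for every $n\ge2$, an $n$-dimensional metrised algebra with a \emph{unique} nonzero idempotent. What saves the situation here is that $a$ acts as the \emph{unit} on the subalgebra $V$. The paper's extremal property (Lemma~\ref{lem:main}) is a second-order statement: if $b$ is the idempotent coming from a local \emph{maximum} of $\scal{x^2}{x}$ on the unit sphere of $V$, then the Hessian condition forces $L_b\le\frac12$ on $b^\perp\cap V$. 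Since $L_a=\mathrm{id}$ on $V$ and $\dim V\ge2$ makes $b^\perp\cap V$ nontrivial, this forces $b\ne a$. That is the content of the ``extremal property''---it concerns the Hessian, not the gradient, and its purpose is to separate $b$ from the unit, not to certify that $b$ is an idempotent.
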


\begin{remark}
Note also that $\frak{G}_a(1)$ is a subalgebra in $\frak{G}$, see \cite{MeyerNeutsch}.
\end{remark}

More precisely, in their proof of the implication ``$\dim \frak{G}_c(1)\ge 2$ $\Rightarrow$ $c$ is decomposable'',  Meyer and Neutsch uses an erroneous argument that a (normalized) cubic form
\begin{equation}\label{cubic1}
\varphi(x):=\frac{\scal{x}{x^2}}{\scal{x}{x}^{\frac32}}
\end{equation}
provides at least two linearly independent idempotents.
Let us explain what is a gap here. It is well known (and easy to see)  that  $x\ne 0$ is a stationary point of $\varphi(x)$ precisely when $x^2=\lambda x$ for some $\lambda\in \R{}$. If $\lambda\ne 0$ then $x$ gives rise to a (nonzero) idempotent by normalizing: indeed $c=x/\lambda$ implies $c^2=c$. In order to get two distinct (linearly independent) idempotents, Meyer and Neutsch suggest to consider a minimum and a maximum points of $\varphi$ which would provide us with two desired idempotents (see  p.~15 in \cite{MeyerNeutsch}). But, since $\phi$ is an \textit{odd function}, this may give us two anti-collinear, and therefore \textit{linearly dependent}, elements. In particular, it is clear from \eqref{cubic1} that $x$ is a local maximum of $\varphi$ if and only if $-x$ is a local minimum of $\varphi$.

\begin{remark}
We will point out that the variational argument used by the authors in the proof of Lemma~2 and Theorem~8 in \cite{MeyerNeutsch} causes no problems  because one uses there an \textit{even} function (based on a quartic form). This, together with nonzero lower estimate (76), allows one to guarantee at least two linear independent stationary points (corresponding to a local maximum and minimum).
\end{remark}

The variational argument used in \cite{MeyerNeutsch} would be easily fixed if one could prove that a general cubic form always has at least $2$ linearly independent  stationary points with nonzero gradient\footnote{Note that the vanishing of gradient implies rather a 2-nilpotent, i.e. $x^2=0$, than idempotent. Therefore one assumes that $\lambda\ne 0$.}. However, this property fails in general,  as a counterexample~ in Section~\ref{ex1} below shows.

Thus, the method suggested in \cite{MeyerNeutsch} does not work in general. Nevertheless, the claim of Theorem~11 holds true. More precisely, we have the following  result which contains Theorem~11 in \cite{MeyerNeutsch} as a corollary.

\begin{theorem}
  \label{th1}
Let $V$ be a commutative nonassociative algebra over  $\R{}$  with an associative inner product $\scal{}{}$. Suppose  $c\ne 0$ is an idempotent of $V$ and the Peirce eigenspace $V_c(1)$ is an subalgebra of $V$. Then $\dim V_c(1)=1$ if and only if $c$ is indecomposable.
\end{theorem}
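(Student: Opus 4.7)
The plan is to reduce Theorem~\ref{th1} to finding an idempotent of $V_c(1)$ distinct from $0$ and $c$, and then to construct one by combining a short two-dimensional algebraic argument with a variational step on the orthogonal complement of $c$ inside $V_c(1)$.

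The ``only if'' direction is immediate: in a decomposition $c=c_1+c_2$ with $c_i^2=c_i\ne 0$ and $c_1c_2=0$, one has $cc_i=c_i^2+c_1c_2=c_i$, so $c_1,c_2\in V_c(1)$; multiplying any linear relation $\lambda c_1+\mu c_2=0$ by $c_1$ forces $\lambda=\mu=0$, giving $\dim V_c(1)\ge 2$. For the converse, since $V_c(1)$ is a subalgebra by hypothesis and $cx=x$ for every $x\in V_c(1)$, the element $c$ is the identity of $V_c(1)$; hence any idempotent $e\in V_c(1)$ with $e\ne 0,c$ yields a decomposition $c=e+(c-e)$, because $e(c-e)=e-e^2=0$ and $(c-e)^2=c-2e+e=c-e$. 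So it suffices to produce such an $e$ when $\dim V_c(1)\ge 2$.

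Decompose $V_c(1)=\mathbb{R}c\oplus W$ with $W=V_c(1)\cap c^\perp$, $\dim W\ge 1$. I would first establish a \emph{two-dimensional reduction}: whenever $w\in W\setminus\{0\}$ satisfies $w^2\in\Span\{c,w\}$, the plane $\Span\{c,w\}$ already carries a nontrivial orthogonal idempotent decomposition of $c$. Indeed, writing $w^2=\alpha c+\beta w$, associativity of the form gives $\alpha=\langle w^2,c\rangle/\|c\|^2=\langle w,wc\rangle/\|c\|^2=\|w\|^2/\|c\|^2>0$, and imposing $e^2=e$ on $e=Bc+Gw$ reduces to the quadratic system $B^2+\alpha G^2=B$, $G(2B+\beta G-1)=0$. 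This system admits two real solutions with $G\ne 0$, and a direct check shows the resulting $e_\pm\in\Span\{c,w\}$ are non-trivial idempotents with $e_++e_-=c$ and $e_+e_-=0$. This is the extremal property of idempotents alluded to in the abstract.

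It remains to produce such a $w\in W$, which I would do by splitting on whether the cubic form $\Phi(w):=\langle w,w^2\rangle$ vanishes identically on $W$. If $\Phi\equiv 0$, polarising gives $\langle x,yz\rangle=0$ for all $x,y,z\in W$, so $W\cdot W\subset W^\perp\cap V_c(1)=\mathbb{R}c$, and any nonzero $w\in W$ already satisfies $w^2\in\mathbb{R}c\subset\Span\{c,w\}$. If $\Phi\not\equiv 0$, consider $g(w):=\langle c+w,(c+w)^2\rangle=\|c\|^2+3\|w\|^2+\Phi(w)$ on $W$. Oddness of $\Phi$ gives $K:=\min_{\|u\|=1,\,u\in W}\Phi(u)<0$, so the scalar function $\mu(r):=\min_{\|w\|=r,\,w\in W}g(w)=\|c\|^2+3r^2+Kr^3$ attains its maximum on $(0,\infty)$ at $r^*:=-2/K$. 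I would then argue that at $r^*$ every sphere-minimiser $w^*$ is in fact an \emph{unconstrained} critical point of $g$ on $W$: by the envelope theorem, the Lagrange multiplier $\lambda$ for the sphere constraint satisfies $2\lambda r^*=\mu'(r^*)$, which vanishes by the choice of $r^*$. Unwinding $\nabla_W g(w^*)=0$ with $c\perp W$ then yields $w^{*2}=-2w^*+(\|w^*\|^2/\|c\|^2)c\in\Span\{c,w^*\}$. The main obstacle is precisely this coupled optimisation: critical points of an odd cubic on a single sphere, as in \cite{MeyerNeutsch}, can get trapped along $\mathbb{R}c$ and return only $c$; optimising over $r$ in addition to $S_r$, tuned to $r^*=-2/K$ so that the radial Lagrange multiplier dies, is what extracts an idempotent genuinely independent of $c$.
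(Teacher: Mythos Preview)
Your argument is correct, but the route is genuinely different from the paper's. The paper applies its Lemma~\ref{lem:main} directly to the unital subalgebra $V_c(1)$: maximising $\scal{x}{x^2}$ on the unit sphere there yields an idempotent $c'$, and the \emph{second-order} condition at the maximum gives the spectral bound $L_{c'}\le\tfrac12$ on $(c')^\perp\cap V_c(1)$; since the unit $c$ satisfies $L_c=\mathrm{id}$ on $V_c(1)$, this forces $c'\ne c$, and $c=c'+(c-c')$ is the decomposition. (In particular, the ``extremal property of idempotents'' alluded to in the abstract is this eigenvalue inequality, not your two-dimensional reduction.) You instead stay at the first-order level: working on $W=c^\perp\cap V_c(1)$, you locate a $w$ with $w^2\in\Span\{c,w\}$ by a min--max over spherical shells, the choice $r^*=-2/K$ being exactly what kills the radial Lagrange multiplier, and then solve an explicit quadratic in the plane $\Span\{c,w\}$. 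Both approaches repair the Meyer--Neutsch gap, but they buy different things: the paper's Hessian inequality is a clean reusable spectral fact about extremal idempotents (Corollary~\ref{cor:1} drops out immediately), while your approach is more constructive---it hands you the decomposing pair $e_\pm$ by formula---at the price of the case split on $\Phi|_W$ and the two-tier optimisation.
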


The theorem easily follows from an auxiliary result (Lemma~\ref{lem:main} below) which has an independent interest. The proof of our results uses the notion of metrised algebra. In the next section we recall some standard definitions and introduce the so-called algebras of cubic forms.

\section{Algebras of cubic forms}
Let $V$ be a vector space over  a field $\field$ of characteristic not $2$ or $3$. A multiplication on $V$ is a bilinear map (denoted by juxtaposition) $$
V\times V\ni (x,y)\to xy\in V.
$$
A vector space with a multiplication is called an algebra over $k$.

A symmetric $k$-bilinear form $Q(x,y):V\times V\to \field$ is called nonsingular if $Q(x,y)=0$ for all $y\in V$ implies $x=0$ \cite{Knus}. In what follows, we use the standard inner product notation for $Q$:
$$
Q(x,y)=\scal{x}{y}
$$
and the squared norm notation $|x|^2=\scal{x}{x}$. Note that $|x|$ may  be not defined in general.

If $V$ is an algebra then the bilinear form $\scal{\cdot}{\cdot}$ is called \textit{associative} \cite{Schafer}, \cite{Okubo81a}  if
\begin{equation}\label{Qass}
\scal{xy}{z}=\scal{x}{yz}, \qquad \forall x,y,z\in V,
\end{equation}
An algebra carrying  a non-singular symmetric bilinear form is called \textit{metrised}, cf.  \cite{Bordemann}. The standard examples are Lie algebras with the Killing form and Jordan algebras with the generic trace bilinear form. The most striking corollary of the existence of an associative bilinear form on an commutative algebra is that the operator of left ($=$right) multiplication
$$
L_x:y\to xy,
$$
is  self-adjoint, i.e.
\begin{equation}\label{selff}
\scal{L_xy}{z}=\scal{y}{L_xyz}, \qquad \forall x,y,z\in V.
\end{equation}

A function $u:V\to \field$ is called a cubic form if its full linearization
$$
u(x,y,z)=u(x+y+z)-u(x+y)-u(x+z)-u(y+z)+u(x)+u(y)+u(z)
$$
is a trilinear form. Note that thus defined linearization is obviously symmetric and the cubic form is recovered by
\begin{equation}\label{16}
6u(x)=u(x,x,x).
\end{equation}
The vector space of all cubic forms on $V$ will be denoted by $\mathcal{C}_3(V)$.

The following is an immediate corollary of the defintions.

\begin{proposition}\label{pro:cub}
Given a vector space $V$ with a  non-singular symmetric bilinear form $\scal{\cdot}{\cdot}$, there exists a canonical bijection between $\mathcal{C}_3(V)$ and commutative metrised algebras $(V,\scal{\cdot}{\cdot})$, where the multiplication  $(x,y)\to xy$ is uniquely determined by
\begin{equation}\label{muu}
xy :=\text{  the unique element satisfying } \scal{xy}{z}=u(x,y,z) \text{ for all } z\in V
\end{equation}
\end{proposition}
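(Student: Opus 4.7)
My plan is to exhibit the bijection in both directions and then check that the two constructions are mutually inverse. The forward direction will send a cubic form $u$ to the multiplication defined by \eqref{muu}, and the reverse direction will send a commutative metrised algebra to the cubic form $u(x):=\frac{1}{6}\scal{x^2}{x}$, whose full linearization at $(x,y,z)$ will turn out to coincide with $\scal{xy}{z}$.

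For the forward direction I would argue as follows. Given $u\in\mathcal{C}_3(V)$ and a pair $x,y\in V$, the map $z\mapsto u(x,y,z)$ is a linear functional on $V$; by nondegeneracy of $\scal{\cdot}{\cdot}$ (together with finite-dimensionality, the natural setting of the paper) this functional equals $\scal{w}{\cdot}$ for a unique $w$, which I declare to be $xy$. Trilinearity of the linearization will automatically give bilinearity of the multiplication, while symmetry of the linearization in its first two arguments gives commutativity $xy=yx$. Finally, the identity $u(x,y,z)=u(y,z,x)$ combined with the symmetry of $\scal{\cdot}{\cdot}$ yields
\[
\scal{xy}{z}=u(x,y,z)=u(y,z,x)=\scal{yz}{x}=\scal{x}{yz},
\]
which is precisely \eqref{Qass}, so $(V,\scal{\cdot}{\cdot})$ becomes a commutative metrised algebra.

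For the reverse direction I would introduce the trilinear form $S(x,y,z):=\scal{xy}{z}$. Commutativity gives $S(x,y,z)=S(y,x,z)$, and the chain $\scal{xy}{z}=\scal{x}{yz}=\scal{yz}{x}$ (using \eqref{Qass} and the symmetry of $\scal{\cdot}{\cdot}$) gives $S(x,y,z)=S(y,z,x)$, so $S$ is fully $S_3$-symmetric. Setting $u(x):=\frac{1}{6}S(x,x,x)=\frac{1}{6}\scal{x^2}{x}$, a direct expansion of the six summands in the linearization formula, using the $S_3$-symmetry of $S$ and the hypothesis $\Char\field\neq 2,3$, will confirm that the full linearization of $u$ is exactly $S$; in particular $u\in\mathcal{C}_3(V)$.

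To close the argument I would verify that these two constructions are inverse to each other. Going $u\mapsto{}$algebra${}\mapsto u'$, formula \eqref{16} gives $u'(x)=\frac{1}{6}\scal{x^2}{x}=\frac{1}{6}u(x,x,x)=u(x)$. Going algebra${}\mapsto u\mapsto{}$product $x*y$, the relation $\scal{x*y}{z}=u(x,y,z)=\scal{xy}{z}$ holds for every $z$, so $x*y=xy$ by nondegeneracy. The only mildly subtle point I anticipate is the representability step in the forward direction: nondegeneracy only gives injectivity of the map $w\mapsto\scal{w}{\cdot}$ into $V^*$, whereas \eqref{muu} also needs surjectivity. This is automatic in finite dimensions and so is not a real obstacle in the setting of the paper.
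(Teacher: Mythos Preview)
Your argument is correct and is exactly the natural way to unpack the claim; note, however, that the paper does not actually give a proof of this proposition but simply declares it ``an immediate corollary of the definitions.'' Your write-up therefore supplies the details the paper omits, including the one genuine subtlety you flagged (that representing the linear functional $z\mapsto u(x,y,z)$ by an element of $V$ uses finite-dimensionality, not just nonsingularity).
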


Given a cubic form $u$ on $V$, the commutative nonassociative algebra obtained according to Proposition~\ref{pro:cub} is denoted by $V(u)$ and is called the \textit{algebra of the cubic form} $u$.
Conversely, given a commutative metrised algebra $A=(V,\scal{\cdot}{\cdot})$, define the cubic form
\begin{equation}\label{recover}
u_A(x):=\frac16 \scal{xx}{x}.
\end{equation}
The factor $\frac16 $ is chosen to agree with \eqref{16}.
Then $A$ is  isomorphic to $V(u_A)$.

We point out that as a corollary of the symmetry of $u(x,y,z)$, the algebra $V(u)$ is always commutative but may be nonassociative.  It is easy to see that the following is true.

\begin{proposition}\label{pro:zeroalgebra}
$V(u)$ is a zero algebra if and only if $u\equiv 0$.
\end{proposition}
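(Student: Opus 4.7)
The plan is to prove Proposition~\ref{pro:zeroalgebra} by a direct unwinding of the correspondence in Proposition~\ref{pro:cub}, using the non-singularity of the bilinear form in one direction and the polarization identity \eqref{16} in the other.

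For the implication $u\equiv 0 \Rightarrow V(u)$ is a zero algebra, I would proceed as follows. The full linearization
\[
u(x,y,z)=u(x+y+z)-u(x+y)-u(x+z)-u(y+z)+u(x)+u(y)+u(z)
\]
vanishes identically when $u\equiv 0$. By the defining property \eqref{muu} of the multiplication on $V(u)$, this gives $\scal{xy}{z}=u(x,y,z)=0$ for every $z\in V$. Since $\scal{\cdot}{\cdot}$ is non-singular, we conclude $xy=0$ for all $x,y\in V$, i.e.\ $V(u)$ is a zero algebra.

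For the converse $V(u)$ is a zero algebra $\Rightarrow u\equiv 0$, I would again use \eqref{muu}: from $xy=0$ for all $x,y\in V$, we get
\[
u(x,y,z)=\scal{xy}{z}=0, \qquad \forall x,y,z\in V.
\]
Specializing to $x=y=z$ and invoking the recovery formula \eqref{16}, we obtain $6u(x)=u(x,x,x)=0$, and since $\Char \field\ne 2,3$ this yields $u(x)=0$ for every $x\in V$.

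There is essentially no obstacle here: the statement is a tautological consequence of the correspondence in Proposition~\ref{pro:cub}. The only two facts genuinely used are (i) the non-singularity of $\scal{\cdot}{\cdot}$ (needed to pass from the vanishing of $\scal{xy}{\cdot}$ to the vanishing of $xy$), and (ii) the characteristic hypothesis implicit in \eqref{16}, which ensures that a cubic form is determined by its full linearization via $6u(x)=u(x,x,x)$.
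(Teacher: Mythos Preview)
Your proof is correct and is exactly the direct verification one would expect; the paper itself does not give a proof of this proposition but simply remarks that ``it is easy to see'' it holds, so there is nothing to compare against.
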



Now we consider the specialization of the above definitions  when $V$ is a vector space over $k=\R{}$ or $\mathbb{C}$. Let $\scal{\cdot}{\cdot}$ be some non-singular symmetric bilinear form on $V$ and let us fix a cubic form $u$. Then it readily follows from \eqref{recover} and \eqref{muu} that the gradient of $u(x)$ is essentially the square of the element $x$ (in $V(u)$):
\begin{equation}\label{grad}
Du(x)=\frac{1}{2}xx=\frac{1}{2}x^2,
\end{equation}
and, furthermore, the multiplication in $V(u)$ is explicitly recovered by
\begin{equation}\label{hess}
(D^2u(x))y=xy,
\end{equation}
where $D^2u(x)$ is the Hessian matrix of $u$ at $x$. This observation is an important ingredient in the proof of the main results of \cite{MeyerNeutsch}, in particular of  Theorem~11. The relations \eqref{grad} and \eqref{hess} are also important tools in  recent applications of nonassociative algebras to global analysis, see \cite[Ch.~6]{NTVbook}, \cite{Tk15b}. Note also that if $k$ is an arbitrary field, then \eqref{grad} and \eqref{hess} can be interpreted as formal definitions of the gradient and the Hessian of $u$.


The  case $\field =\R{}$ has a particular interest as the following property shows.

\begin{lemma}\label{lem:main}
Let $(V,\scal{\cdot}{\cdot})$ be a (nonzero) commutative metrized algebra over $\R{}$, $\scal{\cdot}{\cdot}$ being positive definite. The set $E$ of constrained stationary points of the variational problem
\begin{equation}\label{variational}
\scal{x}{x^2}\to \max\quad \text{ subject to a constraint}\quad \scal{x}{x}=1
\end{equation}
is nonempty and the maximum is attained. Denote by $E_0\subset E$ the (nonempty) set of local maxima in \eqref{variational}. Then for any $x\in E$, either $x^2=0$ or $c:=x/\scal{x^2}{x}$ is an idempotent in $V$. If additionally $x\in E_0$ then $\scal{x^2}{x}>0$ and the corresponding idempotent  $c=x/\scal{x^2}{x}$ satisfies the extremal property
\begin{equation}\label{extremal}
L_c\le \half12 \text{ on }c^\bot:=\{x\in V:\scal{x}{c}=0\}.\end{equation}
In particular, the eigenvalue $1$ of $L_c $ is simple.
\end{lemma}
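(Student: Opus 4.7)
The plan is to combine compactness of the unit sphere with Lagrange multipliers, and then read off the operator inequality from the second-order necessary condition at a maximum.

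First, since $\langle\cdot,\cdot\rangle$ is positive definite, the unit sphere $S=\{x\in V:\langle x,x\rangle=1\}$ is compact (the algebra being implicitly finite-dimensional), and continuity of $f(x):=\langle x^2,x\rangle$ on $S$ yields that $f$ attains its maximum $M$, so $E$ and $E_0$ are nonempty. For any $x\in E$, Lagrange multipliers combined with the identity $\nabla(\tfrac16\langle x^2,x\rangle)=\tfrac12 x^2$ from \eqref{grad} give $x^2=\mu x$ for some scalar $\mu$; pairing with $x$ and using $\langle x,x\rangle=1$ shows $\mu=\langle x^2,x\rangle$. Either $\mu=0$, in which case $x^2=0$, or $\mu\neq0$, in which case $c:=x/\mu$ satisfies $c^2=x^2/\mu^2=\mu x/\mu^2=c$ and is thus an idempotent. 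Since $V$ is not the zero algebra, Proposition~\ref{pro:zeroalgebra} guarantees that $f$ is not identically zero on $S$; as $f$ is odd, this forces $M>0$. Taking the global maximum $x_*\in E_0$ therefore supplies $\langle x_*^2,x_*\rangle=M>0$.

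To derive the extremal property \eqref{extremal}, I would test the maximum condition along a great circle $x(s)=\cos(s)\,x_*+\sin(s)\,y'$ on $S$ with $y'\perp x_*$ and $|y'|=1$. Using the fully symmetric trilinear form $\phi(a,b,c):=\langle ab,c\rangle$ (symmetric by commutativity and \eqref{Qass}), a direct Taylor expansion gives
\begin{equation*}
f(x(s))=M-\frac{3M}{2}\,s^{2}+3\,s^{2}\,\langle L_{x_*}y',y'\rangle+O(s^{3}),
\end{equation*}
once the stationarity relation $\phi(x_*,x_*,y')=0$ kills the linear term. The inequality $f(x(s))\le M$ near $s=0$ forces $\langle L_{x_*}y',y'\rangle\le M/2$ for every unit $y'\perp x_*$. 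Dividing by $M>0$ and using $L_c=L_{x_*}/M$ together with $c^{\perp}=x_*^{\perp}$ yields $\langle L_c y,y\rangle\le\tfrac12\langle y,y\rangle$ on $c^{\perp}$, which is exactly \eqref{extremal}. For simplicity of the eigenvalue $1$, $L_c$ is self-adjoint by \eqref{selff}, so the $1$-eigenspace splits off orthogonally; any element of it not collinear with $c$ would provide a nonzero $y\in c^{\perp}$ with $L_c y=y$, contradicting $L_c\le\tfrac12<1$ there. Hence the eigenspace equals $\Span(c)$.

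I expect the main technical care to lie in the second-order computation along the constrained curve: one must parameterize strictly on $S$ rather than linearly in the ambient $V$ so that the curvature of the constraint produces the $-\tfrac{3M}{2}\,s^{2}$ contribution which, coupled with the cubic, is exactly what accounts for the factor $\tfrac12$ in the bound. The remaining arguments — the case split between the nilpotent and idempotent alternatives, positivity of $M$ via Proposition~\ref{pro:zeroalgebra}, and the jump from a quadratic-form inequality to simplicity of the $1$-eigenspace — are then routine consequences of self-adjointness of $L_c$ and the symmetry of the trilinear form.
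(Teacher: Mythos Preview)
Your argument is correct and follows the same overall strategy as the paper: compactness gives existence, Lagrange multipliers identify stationary points with idempotents or nilpotents, and a second-order necessary condition at the maximum yields the operator bound \eqref{extremal}. Where you differ is in how that second-order condition is extracted. The paper passes to the degree-zero homogeneous extension $f(x)=\langle x^2,x\rangle/|x|^3$ on $V\setminus\{0\}$, computes its full Hessian explicitly (formula \eqref{D2f}), and reads off $2L_c-1\le 0$ on $c^\perp$ from non-positivity of $D^2f(c)$. Your great-circle expansion stays on the sphere and arrives at the same inequality with less computation; the curvature term $-\tfrac{3M}{2}s^2$ in your expansion is exactly what the $-\scal{x^2}{x}|x|^2$ piece of the paper's Hessian produces after restriction to $c^\perp$. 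So the two routes are equivalent in spirit, with yours being the more economical.

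One small point worth flagging: the lemma as stated asserts $\langle x^2,x\rangle>0$ and \eqref{extremal} for \emph{every} local maximum $x\in E_0$, not only the global one. Your derivation of \eqref{extremal} works verbatim at any local maximum once its value is known to be positive, but you justify positivity only at the global maximizer $x_*$. The paper's proof has the same lacuna (the phrase ``obviously $f(x)>0$'' is really a global-maximum argument). Since the downstream applications (Corollary~\ref{cor:1} and Theorem~\ref{th1}) need only a single extremal idempotent, this does not affect anything that follows.
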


\begin{proof}
By the positive definiteness of $\scal{\cdot}{\cdot}$, the unit sphere $S=\{x\in V:\scal{x}{x}=1\}$  is compact in the standard Euclidean topology on $V$ induced by $|x|^2=\scal{x}{x}$.
By the assumption and  Proposition~\ref{pro:zeroalgebra}, the cubic form $u(x)=\half16\scal{x^2}{x}\not\equiv0$. Note that the algebra $V(u)$ is isomorphic to $V$.

Since $u$ is continuous as a function on $S$, it attains its maximum value there. In particular both $E_0$ and $E$ are nonempty. Let $x\in E$ be a stationary point, then
using Lagrange’s undetermined multipliers and \eqref{grad} we obtain for the directional derivative
$$
0=\partial_y u|_{x}=\half{1}{2}\scal{x^2}{y}\quad
\text{for any $y\in V$ and $\scal{x}{y}=0$}.
$$
which implies that $x$ and $x^2$ are parallel, i.e. $x^2=\lambda x$ where $\lambda=\scal{x^2}{x}$. Thus, $x^2=0$ if and only if $\scal{x^2}{x}=0$, otherwise scaling appropriately we have that $c:=x/\scal{x^2}{x}$ is  a nonzero idempotent in $V$.

To prove the second claim of the lemma, note that the function $f(x)=\frac{\scal{x^2}{x}}{|x|^3}$ is a homogeneous of degree zero and smooth outside the origin. We have
\begin{equation}\label{D1f}
\partial_y f|_{x}=\frac{3(\scal{x^2}{y}|x|^2-\scal{x^2}{x}\scal{x}{y})}{|x|^5},
\end{equation}
implying by duality the expression for the gradient
$$
\half13Df(x)=\frac{x^2|x|^2-\scal{x^2}{x}x}{|x|^5}.
$$
Arguing similarly, one obtains
$$
\half13\partial_z Df(x)=\frac{2|x|^4xz-3|x|^2(x^2\scal{x}{z}+\scal{x^2}{z}x) -\scal{x^2}{x}|x|^2z +5\scal{x}{z}\scal{x^2}{x}x}{|x|^7},
$$
hence the Hessian  is given by
\begin{equation}\label{D2f}
\half13D^2f(x)=
\frac{2|x|^4L_x-\scal{x^2}{x}|x|^2-3|x|^2(x\otimes x^2+x^2\otimes x)+5\scal{x^2}{x} x\otimes x}{|x|^5}.
\end{equation}
Now let $x\in E_0$. Then  $x$ is a local maximum of $f$ and obviously $f(x)=\scal{x^2}{x}>0$ (because $\scal{x^2}{x}\not\equiv0$ on $V$). By the above $c=x/\scal{x}{x^2}$ is an idempotent, and by the homogeneity $f(x)=f(c)$, hence $c$ is a local maximum of $f$ too. Therefore the maximum condition implies that the Hessian matrix is non-positive definite
$$
0\ge \half13D^2f(c)=
\frac{(2L_c-1)|c|^2\,-c\otimes c}{|c|^3}.
$$
Since $c\otimes c=0$ on $c^\bot$ we conclude that $2L_c-1\le 0$ on $c^\bot$, thus, the desired conclusion follows.

\end{proof}

\begin{definition}
An idempotent $c\in (V,Q)$ corresponding to any local maximum in \eqref{variational} is said to be \textit{extremal}.
\end{definition}

An important corollary of Lemma~\ref{lem:main} is the following

\begin{corollary}\label{cor:1}
Let a nonzero \textit{unital} commutative algebra $V$ over $\R{}$ possess a symmetric positive definite associative bilinear form and $\dim V\ge 2$. Then there exists an idempotent in $V$ distinct from the unit. In particular, the unit is decomposable and $V$ has at least three distinct nonzero idempotents.
\end{corollary}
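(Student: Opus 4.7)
The plan is to apply Lemma~\ref{lem:main} to produce an extremal idempotent $c$ of $V$ and then use it to split the unit into two orthogonal idempotents.

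Write $e$ for the unit of $V$, which is itself a nonzero idempotent. Since $V$ is a nonzero commutative metrised algebra over $\R{}$ with a positive definite form, Lemma~\ref{lem:main} furnishes an extremal idempotent $c\in V$ satisfying $L_c\le \frac12$ on $c^\bot$. The first step is to verify $c\ne e$. Because $\scal{\cdot}{\cdot}$ is nondegenerate and $\dim V\ge 2$, the orthogonal complement $e^\bot$ has dimension at least $1$; choosing $0\ne y\in e^\bot$ and assuming $c=e$ would yield $L_c y=ey=y$, hence $\scal{L_c y}{y}=\scal{y}{y}>0$, in direct contradiction with the extremal bound $\scal{L_c y}{y}\le \frac12\scal{y}{y}$.

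Having established $c\ne e$, I would next introduce $c':=e-c$ and check that it is a nonzero idempotent algebraically orthogonal to $c$. Using $c\cdot e=c$, one computes
$$
c\cdot c'=c(e-c)=c-c^2=0,
$$
and consequently
$$
(c')^2=(e-c)^2=e-2c+c^2=e-c=c'.
$$
Furthermore $c'\ne 0$ since $c\ne e$, while the three elements $e,c,c'$ are pairwise distinct: $c\ne e$ by the above, $c'\ne e$ since $c\ne 0$, and $c=c'$ would force $e=2c$, hence $e=(2c)^2=4c^2=4c=2e$, i.e.\ $e=0$, a contradiction.

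Thus $V$ contains at least the three distinct nonzero idempotents $e$, $c$, $e-c$, and the identity $e=c+(e-c)$ together with $c(e-c)=0$ exhibits $e$ as a sum of two nonzero orthogonal idempotents, proving its decomposability. The only delicate point is the step ruling out $c=e$; this is precisely where the extremal inequality supplied by Lemma~\ref{lem:main} is indispensable, and it is the main leverage that the corollary extracts from the lemma.
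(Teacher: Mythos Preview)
Your proof is correct and follows essentially the same route as the paper's: invoke Lemma~\ref{lem:main} to obtain an extremal idempotent $c$, use the bound $L_c\le\frac12$ on $c^\bot$ (nontrivial since $\dim V\ge 2$) together with $L_e\equiv 1$ to conclude $c\ne e$, and then check that $e-c$ is a third nonzero idempotent. Your write-up simply makes explicit several details the paper leaves to the reader, such as the orthogonality $c(e-c)=0$ and the pairwise distinctness of $e,c,e-c$.
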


\begin{proof}
By Lemma~\ref{lem:main} there exists a (nonzero) extremal idempotent $c\in V$. By the extremal property, one also has $L_c\le \half12$ on $c^\bot$. Note that the subspace $c^\bot$ is nontrivial because $\dim c^\bot=\dim V-1\ge 1$. Let $e\in V$ be the algebra unit. Then $e$ is an idempotent and $e\ne c$ because $L_e\equiv 1$ on $V$. Then $(e-c)^2=e-2ec+c=e-c$, hence $e-c$ is also an idempotent. The claim follows.
\end{proof}

Now we are able to finish the proof of the main result.
\begin{proof}[Proof of Theorem~\ref{th1}]
First recall that an idempotent is called decomposable if it may be expressed as a sum of at least two non-zero idempotents; otherwise, the idempotent is called indecomposable. Now, if $c$ is decomposable, say $c=c_1+c_2$, $c_i$ being nonzero idempotents, then $c_1c_2=0$ and $c_1,c_2$ are obviously  linearly independent. One has therefore $cc_i=c_i$, hence $c_i\in V_c(1)$, implying $\dim V_c(1)\ge2$.

In the converse direction, suppose that $c\ne0$ be a idempotent in $V$ with $\dim V_c(1)\ge 2$ and $V_c(1)$ is a subalgebra of $V$. Note that $c$ acts on the subalgebra $V_c(1)$ as a unit. Therefore by Corollary~\ref{cor:1}, $c$ is decomposable.

\end{proof}

\section{A counterexample}
\label{ex1}

Here we construct an example of a cubic form whose stationary points  with nonzero gradient lies on a line. On the nonassociative algebra level, this shows that  for any $n\ge 2$ there exist nontrivial $n$-dimensional algebras over $\R{}$ with associative bilinear form having only one nonzero idempotent.

Let $n\ge 2$ and consider a cubic form $u(x)=\frac13x_1(x_1^2+3a_2x_2^2+\ldots +3a_nx_n^2)$ in  $\R{n}$ equipped with the standard Euclidean scalar product. Suppose that all $a_i$ are different and $0<a_i<\frac12$. We claim that any stationary point of $\varphi(x)=u(x)/|x|^3$ \textit{with nonzero gradient} are parallel to $\xi:=(1,0,\ldots,0)$.
Indeed, given a stationary point $x$  of $\varphi(x)$, the stationary point condition readily yields $Du(x)=\lambda x$ for some real $\lambda$. By the assumption $Du(x)\ne 0$, hence $\lambda\ne 0$, in particular $x\ne 0$. Since $u(-x)=-u(x)$, one has $Du(x)=Du(-x)$, thus interchanging $x$ and $-x$, one may assume without loss of generality that $\lambda>0$, and, also by the homogeneity of $u$ that $\lambda=1$. This yields
\begin{align}
x_1^2+(a_2x_2^2+\ldots +a_nx_n^2)&=x_1,\label{e1}\\
2a_kx_kx_1&=x_k, \quad\text{for }k=2,\ldots, n.\label{e2}
\end{align}
If $x_k=0$ for $2\le k\le n$ then by \eqref{e1}  $x_1^2=x_1$, thus $x_1=1$, i.e. $x=\xi$. If at least two $x_i$ and $x_j$ are nonzero for $2\le i,j\le n$ then \eqref{e2} implies $x_1=\frac1{2a_i}=\frac1{2a_j}$, a contradiction with $a_i\ne a_j$.
Therefore there can exist only one nonzero $x_i\ne 0$ for some $2\le i\le n$. But in that case, $x_1=\frac1{2a_i}$ and \eqref{e1} yields by the assumptions on $a_i$ that $x_i^2=\frac{2a_i-1}{4a_i^3}<0$, a contradiction again. This proves the claim.

\begin{remark}
Some final remarks are also in order. In the proof of Lemma~\ref{lem:main}, we made an essentially use of the condition that the associative form is positive definite. It is unclear, however, if Theorem~\ref{th1} remains true if one drops the definiteness condition and consider a weaker assumption, for example, that $\scal{\cdot}{\cdot}$  is merely non-singular.
\end{remark}


\bibliographystyle{plain}

\def\cprime{$'$} \def\cprime{$'$}

\end{document}